\documentclass[11pt]{amsart}
\usepackage{amssymb}
\usepackage{dsfont}
\usepackage{color}

\textwidth=480pt
\textheight=695pt
\oddsidemargin=-5pt
\evensidemargin=-5pt
\topmargin=-15pt

\newtheorem{theorem}{Theorem}[section]
\newtheorem{lemma}[theorem]{Lemma}

\newtheorem{corollary}[theorem]{Corollary}

\newtheorem{fact}[theorem]{Fact}
\newtheorem{question}[theorem]{Question}
\newtheorem{remark}[theorem]{Remark}

\theoremstyle{definition}
\newtheorem{df}{Definition}
\newtheorem{example}[df]{Example}

\newcommand{\F}{\mathcal F}
\newcommand{\I}{\mathcal I}

\newcommand{\B}{\mathcal B}
\newcommand{\Pe}{\mathcal P}
\newcommand{\N}{\mathbb N}
\newcommand{\Q}{\mathbb Q}
\newcommand{\R}{\mathbb R}

\newcommand{\ve}{\varepsilon}
\newcommand{\on}{\operatorname}

\linespread{1.3}

\author{Marek Balcerzak}
\address{Institute of Mathematics, \L \'od\'z University of Technology,
W\'olcza\'nska 215, 93-005 \L \'od\'z, Poland}
\email {marek.balcerzak@p.lodz.pl}
\author{Micha{\l} Pop\l awski}
\address{Institute of Mathematics, \L \'od\'z University of Technology,
W\'olcza\'nska 215, 93-005 \L \'od\'z, Poland}
\email {michal.poplawski.m@gmail.com}
\author{Artur Wachowicz}
\address{Institute of Mathematics, \L \'od\'z University of Technology,
W\'olcza\'nska 215, 93-005 \L \'od\'z, Poland}
\email {artur.wachowicz@p.lodz.pl}

\title{Ideal convergent subseries in Banach spaces} 
\date{}

\subjclass[2010]{40A05, 46B25, 54E52, 28A05} 
\keywords{series in Banach spaces, ideal convergence, Baire category, measure on the Cantor space, subseries}
\date{}
\begin{document}
\begin{abstract}
Assume that $\I$ is an ideal on $\N$, and $\sum_n x_n$ is a divergent series in a Banach space $X$. 
We study the Baire category, and the measure of the set
$A(\I):=\left\{t \in \{0,1\}^{\N} \colon \sum_n t(n)x_n \textrm{ is } \I\textrm{-convergent}\right\}$.
In the category case, we assume that $\I$ has the Baire property  and $\sum_n x_n$ is not unconditionally convergent, and
we deduce that $A(\I)$ is meager.
We also study the smallness of $A(\I)$ in the measure case when the Haar probability measure $\lambda$ on $\{0,1\}^{\N}$
is considered. If $\I$ is analytic or coanalytic, and $\sum_n x_n$ is $\I$-divergent, then 
$\lambda(A(\I))=0$ which extends the theorem of Dindo\v{s}, \v{S}al\'at and Toma. Generalizing one of their examples, we show
that, for every ideal $\I$ on $\N$, with the property of long intervals, there is a divergent series of reals such that $\lambda(A(\on{Fin}))=0$ and $\lambda(A(\I))=1$. 
\end{abstract}
\maketitle
\section{Introduction}
We consider series in Banach spaces over $\R$. If a given series is divergent, some of its subseries may be convergent.
It is interesting to investigate the set of such subseries from the viewpoints of measure and the Baire category.
A subseries is obtained from a given series $\sum_n x_n$ by the choice of an infinite set of its terms without the change of ordering. So, either we choose an increasing sequence of indices $s(1)<s(2),\dots$ and consider $\sum_n x_{s(n)}$, or we choose a 0-1 sequence $(t(n))$ and consider $\sum t(n)x_n$. One can see that, in the case of usual convergence, these two approaches
are equivalent since the respective sequences of partial sums, of $\sum_n x_{s(n)}$ and $\sum t(n)x_n$, are either convergent or divergent, simultaneously, provided that $t(n)=1$ iff $n\in s[\N]$. The approach dealing with 0-1 sequences can use the structure of $\{0,1\}^\N$ which is a Polish group with the standard probability Haar measure. Hence the coding of subseries by
elements of $\{0,1\}^\N$ is useful to study the size of selected sets of subseries in both measure and category viewpoints
(cf. \cite{RRR}). Increasing sequences $s(1)<s(2),\dots$ of indices form a closed subset $S$ of the Polish space $\N^\N$.
Hence $S$ is also a Polish space, and the Baire category of sets of subseries can be studied when they are coded by elements of $S$ (cf. \cite{K1}).

Our previous paper \cite{BPW} is devoted to studies of the Baire category of sets of subseries that are convergent in a more general sense. Namely, we considered $\I$-convergence where $\I$ is an ideal of subsets of $\N:=\{1,2,\ldots\}$. The only case considered before, going to a similar direction,
was connected with the classic density ideal $\I_d$ (then one speaks about statistical convergence) in the article \cite{DST} where the measure viewpoint was also investigated. 

In \cite{BPW}, we used the coding of subseries by elements of $S$.
Here we decide to use the coding by elements of $\{0,1\}^\N$. Surprisingly, we will show (in Section 2) that, for $\I$-convergence, this approach can be different from that using elements of $S$. In Section 3, we develop the studies of \cite{BPW}. We assume that a given series $\sum_n x_n$ is not unconditionally convergent in a Banach space $X$. 
We establish the meagerness
of set of $\I$-convergent subseries of $\sum_n x_n$ if an ideal $\I$ has the Baire property. Thanks to the coding subseries
by elements of $\{0,1\}^\N$ we can drop additional assumptions that were needed when the approach with $S$-coding was used in \cite{BPW}. We show that the third approach using
parameters from $\{-1,1\}$ is also of some interest because of a known characterization of unconditionally
convergent series.

In Section 4, we detect the measure of the set of $\I$-convergent subseries of a given $\I$-divergent series.
We extend the former results of \cite{DST} dealing with statistical convergence. One of our theorems uses
a new notion of an ideal with the property of long intervals (PLI). We discuss examples connected with
the notions of denseness, PLI, and shift-invariance of ideals. In Section 5, we compare
the category and the measure cases, and we pose some open problems.

Let us recall basic information on ideals and ideal convergence.
By an {\em ideal on} $\N$ we mean an ideal $\I$ of subsets of $\N$ such that $\N\notin\I$ and
$\on{Fin} \subset \I$ where $\on{Fin}$ stands for the family of all finite subsets of $\N$. Since the power set $\Pe(\N)$
can be identified with the Cantor space $\{0,1\}^\N$ (via characteristic functions), an ideal on $\N$ can be treated as a subset of $\{0,1\}^\N$. So, we can speak on $F_\sigma$, Borel, analytic ideals, or ideals with the Baire property, etc.
(cf. \cite{Far}).

The following result (due to Jalali-Naini and Talagrand; see \cite{JN}, \cite{Ta}) gives a useful characterization of
ideals with the Baire property.

\begin{lemma}\label{Tal}
An ideal $\I$ on $\N$ has the Baire property if and only if
there is an infinite sequence $n_1<n_2<\dots$ in $\N$ such that no member of $\I$ contains infinitely many intervals $[n_i,n_{i+1})\cap\N$.
\end{lemma}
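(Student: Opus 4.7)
The plan is to work in $\{0,1\}^\N \cong \Pe(\N)$ via characteristic functions and exploit the equivalence, for ideals, between the Baire property and meagerness.

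For the easy direction $(\Leftarrow)$, suppose $(n_i)$ exists as claimed. The set
\[
G := \bigcap_k \bigcup_{i \geq k}\{A : [n_i,n_{i+1}) \subseteq A\}
\]
is a dense $G_\delta$: each inner union is a countable union of clopen cylinders, and is dense because any basic clopen $[s]$ admits an extension placing $1$'s on a far-enough interval $[n_i, n_{i+1})$. The hypothesis $\I \cap G = \emptyset$ then gives that $\I$ is meager and, in particular, has the Baire property.

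For the converse direction, the first step is to show $\I$ is meager. Since $A \triangle B \subseteq A \cup B \in \I$ whenever $A, B \in \I$, the ideal $\I$ is a subgroup of the Polish group $(\{0,1\}^\N, \triangle)$; by Pettis's theorem, a subgroup with the Baire property is either meager or the whole space, and $\N \notin \I$ excludes the latter. A second basic observation is that $\I$ contains no cofinite set, since a cofinite $A \in \I$ would force $\N = A \cup (\N \setminus A) \in \I$ via $\N \setminus A \in \on{Fin} \subseteq \I$.

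Next, I would write $\I \subseteq \bigcup_k F_k$ with $F_k$ closed nowhere dense and increasing, and (by a standard refinement, using that the cofinite sets form a countable meager $F_\sigma$ disjoint from $\I$) arrange that each $F_k$ contains no cofinite set. Then I construct $n_i$ recursively: set $n_1 := 1$, and given $n_k$, consider the decreasing closed cylinders $C_m := \{A : A(j) = 1 \text{ for } n_k \leq j < m\}$, $m > n_k$. Their intersection $\{A : A \supseteq [n_k, \infty)\}$ consists entirely of cofinite sets, hence is disjoint from $F_k$; by compactness of $\{0,1\}^\N$, some $C_m$ is already disjoint from $F_k$, and I set $n_{k+1} := m$. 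For any $A \in \I$, picking $K$ with $A \in F_K$ gives $A \in F_k$ for every $k \geq K$ by monotonicity, so $[n_k, n_{k+1}) \not\subseteq A$ for all such $k$, leaving only finitely many intervals contained in $A$.

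The main technical obstacle is the refinement step arranging each $F_k$ to avoid cofinite sets. Because $\I$ is dense in $\{0,1\}^\N$ (it contains $\on{Fin}$), one cannot strengthen the cover by taking closures; instead, one decomposes each $F_k$ into countably many closed nowhere dense pieces of the form $F_k \cap \{A(j) = 0\}$ with $j$ large and re-indexes, which requires some care but is routine.
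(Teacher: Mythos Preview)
The paper does not prove this lemma; it merely cites it as a known result of Jalali-Naini and Talagrand. So there is nothing in the paper to compare against, and your attempt stands or falls on its own.

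Your argument for $(\Leftarrow)$ is correct, and for $(\Rightarrow)$ the reduction to meagerness via Pettis's theorem and the compactness scheme are both sound. The genuine gap is the refinement step. The pieces $F_k \cap \{A : A(j)=0\}$ do \emph{not} avoid cofinite sets: any cofinite $C$ with $j\notin C$ lies in such a piece. After you re-index and pass to increasing unions $G_\ell$, each $G_\ell$ must contain pieces with small $j$ (since $G_\ell\supseteq G_1$), so you cannot enforce $j\ge n_\ell$ for all pieces in $G_\ell$; hence $G_\ell\cap\{A:[n_\ell,\infty)\subseteq A\}$ need not be empty and the compactness step breaks down. This is a missing idea, not routine bookkeeping.

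A clean repair uses the ideal property directly: replace $F_k$ by the \emph{hereditary} set
\[
G_k:=\{A:\Pe(A)\subseteq F_k\}.
\]
Each $G_k$ is closed (if $A_n\to A$, $A_n\in G_k$, and $B\subseteq A$, then $B\cap A_n\to B$ with $B\cap A_n\in F_k$), nowhere dense (since $G_k\subseteq F_k$), hereditary, and increasing. The cover $\I\subseteq\bigcup_k G_k$ holds because for $A\in\I$ the compact set $\Pe(A)$ lies in $\I\subseteq\bigcup_k F_k$, and applying the finite intersection property to the decreasing closed sets $\Pe(A)\cap F_k^c$ yields $\Pe(A)\subseteq F_K$ for some $K$. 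Now if some $A\in G_k$ had $[n_k,\infty)\subseteq A$, heredity would give $[n_k,\infty)\in G_k$ and hence $\Pe([n_k,\infty))\subseteq G_k$; but $\Pe([n_k,\infty))=\{A:A(j)=0\text{ for }j<n_k\}$ is clopen, contradicting nowhere density. With this $G_k$ your compactness argument goes through verbatim.
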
 

If $(X,\rho)$ is a metric space and $\I$ is ideal on $\N$, we can investigate the notion of $\I$-convergence (see e.g.\cite{KSW}). We say that a sequence $(x_n)$ in $X$ is $\I$-{\em convergent} to $x \in X$ (and write $\I$-$\lim_n x_n=x$) if 
$$\{n \in \N \colon \rho(x_n,x)>\varepsilon \} \in \I$$
 for each $\varepsilon>0$ (clearly, we can use $\rho(x_n,x)\geq\ve$ in this definition). If such a limit exists, then it is unique. Note that $\lim_n x_n=x$ implies $\I$-$\lim_n x_n=x$.
 If we apply $\I$-convergence to the case of the sequence $(\sum_{i=1}^n x_i)_{n\in\N}$ of partial sums of a sequence $(x_n)$ in a normed space, we obtain the notion of an $\I$-convergent series. A series is called $\I$-divergent if it is not $\I$-convergent. The case $\on{Fin}$-convergernce leads us to the classic notion of convergence. Another example is statistical convergence, i.e. $\I_d$-convergence where 
$$\I_d:=\left\{E \subset \N \colon \lim_{n \to \infty} \frac{|E \cap \{1,\ldots,n\}|}{n}=0\right\}$$ 
is called the {\em density ideal}. Statistical convergence of sequences was investigated in many papers (see for instance, 
\cite{S1}, \cite{Mi}, \cite{DMK}, \cite{CGK}). There is only a few articles
on statistical convergence of series  (\cite{DST}, \cite{CST}, \cite{DS}). A general case of ideal (or filter) convergence of series has been also of some interest recently
(see \cite{GO}, \cite{L}, \cite{LO}, \cite{BPW}). Note that a filter is a dual notion to an ideal. If $\I$ is an ideal on
$\N$ then $\F_\I:=\{\N\setminus E\colon E\in\I\}$ is a {\em filter on} $\N$, and if $\F$ is a filter on $\N$ then
$\I_{\F}:=\{\N\setminus E\colon E\in\F\}$ is an ideal on $\N$.

For $E\subset\N$ and $m,k\in\N$, we write $mE\pm k:=\{ mn\pm k\colon n\in E\}$.
An ideal $\I$ on $\N$ is called {\em dense} if each infinite subset of $\N$ has an infinite subset in $\I$.
We say that $\I$ is a P-{\em ideal} if, for any $E_n\in\I$, $n\in\N$, there exists $E\in\I$ with $E_n\setminus E\in\I$ for each $n\in\N$. (See \cite{Far}.)
We say that $\I$ is {\em shift-invariant} if, for every $E\in\I$, we have $E+1\in\I$ (cf. \cite{LO}). One can also consider the {\em two-sided shift-invariance} of $\I$, defined by $(E\pm 1)\cap\N\in\I$ for each $E\in\I$. Clearly, the ideal $\I_d$ is dense and two-sided shift-invariant. It is also a P-ideal.

The following lemma (used in \cite{BPW}) is a consequence of the result by Dems \cite{D}. This is the ideal analogue of the Cauchy condition for series.

\begin{lemma} \label{Cau}
A series $\sum_n x_n$ in a Banach space is $\I$-convergent if and only if
for every $\ve >0$ there exists $m\in\N$ such that the set $\{ j>m\colon\|\sum_{i=m+1}^j x_i\|>\ve\}$ belongs to $\I$. 
\end{lemma}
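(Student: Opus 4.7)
\emph{Plan.} Set $s_n := \sum_{i=1}^n x_i$, so that $\sum_{i=m+1}^j x_i = s_j - s_m$; the problem then reduces to characterising $\I$-convergence of the partial-sum sequence through the stated one-sided ``ideal-Cauchy'' condition. I treat the two implications separately; only the backward direction requires real work, and it uses completeness of $X$.

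\emph{Necessity.} Suppose $\I$-$\lim_n s_n = s$. Given $\ve>0$, set $D := \{n\in\N : \|s_n - s\| > \ve/2\} \in \I$. Since $\on{Fin}\subset\I$ while $\N\notin\I$, the complement of $D$ cannot be empty; pick any $m\notin D$. By the triangle inequality, $\|s_j - s_m\| > \ve$ forces $\|s_j - s\| > \ve/2$, so $\{j>m : \|s_j - s_m\| > \ve\} \subseteq D \in \I$.

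\emph{Sufficiency.} This is the main step. One cannot invoke a general ``$\I$-Cauchy $\Rightarrow$ $\I$-convergent'' principle, which is known to fail for arbitrary ideals; instead, I extract a classical Cauchy subsequence of partial sums. For each $k\in\N$ apply the hypothesis with $\ve = 1/2^k$ to obtain $m_k$ such that $A_k := \{j > m_k : \|s_j - s_{m_k}\| > 1/2^k\} \in \I$. For any $k<l$, the set $\{j\in\N : j>\max(m_k,m_l)\}\setminus(A_k\cup A_l)$ is a cofinite set minus a member of $\I$, hence lies outside $\I$ and is in particular non-empty. Picking any $j$ in it and applying the triangle inequality gives $\|s_{m_k} - s_{m_l}\| \le 1/2^k + 1/2^l$, so $(s_{m_k})_k$ is a genuine Cauchy sequence in $X$ and converges to some $s\in X$ by completeness.

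\emph{Conclusion and main obstacle.} To verify $\I$-$\lim_n s_n = s$, given $\ve>0$ I choose $k$ so large that $1/2^k + \|s_{m_k} - s\| < \ve$; then for $j > m_k$ outside $A_k$ the triangle inequality yields $\|s_j - s\| < \ve$, giving $\{j : \|s_j - s\| \ge \ve\} \subseteq \{1,\dots,m_k\} \cup A_k \in \I$. The subtlety I expect to be the crux is that the hypothesis supplies merely \emph{some} $m$ for each $\ve$, not every sufficiently large $m$ as in the classical Cauchy criterion; any argument must therefore combine the different choices $m_k$ by intersecting their ``good'' complements, leaning on the fact that $\I$ contains no cofinite subset of $\N$.
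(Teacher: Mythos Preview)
Your argument is correct in both directions. The paper itself does not give a proof of this lemma at all: it simply records that the statement ``is a consequence of the result by Dems \cite{D}'' and moves on. Dems' theorem says precisely that, in a complete metric space, a sequence is $\I$-convergent if and only if it is $\I$-Cauchy in the sense that for every $\ve>0$ there exists $N$ with $\{n:\rho(x_n,x_N)\ge\ve\}\in\I$; the lemma is just this statement applied to the sequence $(s_n)$ of partial sums. So the paper's ``proof'' is a one-line citation, whereas you have written out the argument in full.

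One remark on your commentary rather than your mathematics: the sentence ``One cannot invoke a general `$\I$-Cauchy $\Rightarrow$ $\I$-convergent' principle, which is known to fail for arbitrary ideals'' is not accurate. What you prove in your sufficiency step \emph{is} that principle, and it holds for every admissible ideal in a complete space --- this is exactly Dems' theorem. You may be thinking of the distinct (and genuinely more delicate) question of when $\I$-convergence coincides with $\I^\ast$-convergence, which does require $\I$ to be a P-ideal. Your extraction of a classically Cauchy subsequence $(s_{m_k})_k$ by intersecting the complements of the $A_k$'s, and the observation that a cofinite set minus a member of $\I$ is non-empty, is precisely the standard proof of Dems' result; so rather than avoiding that principle, you have reproved it.
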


\section{Subseries -- two approaches}
As it was mentioned in Section 1, the notion of subseries can be considered at least in two manners. It will be important to distinguish
them when one considers $\I$-convergence of subseries (see Example~\ref{E1} below). 
Let us define
$$S:=\{s \in \N^{\N}  \colon \forall{n \in \N} \  s(n)<s(n+1) \} .$$
Then a series of the form $\sum_n x_{s(n)}$, with $s\in S$, will be called an $S$-{\em subseries} of $\sum_n x_n$.
A series of the form $\sum_n t(n)x_n$, with $t\in\{0,1\}^\N$, will be called a 0-1 {\em subseries} of $\sum_n x_n$.
There is a natural homeomorphic embedding $h\colon S\to\{0,1\}^\N$.
Namely, for $s:=(n_k)\in S$, put $h(s):=\chi_{\{n_k\colon k\in\N\}}$.
The image $h[S]$ consists of all sequences having infinitely mamy ones. This is a co-countable subset of $\{0,1\}^\N$
and we can assume that the remaining sequences, with finitely many ones, determine trivially convergent subseries.
In fact, the set $\{0,1\}^\N\setminus h[S]$ is negligible in the sense of measure and the Baire category. 

Let a series $\sum_n x_n$ in a normed space be given, and fix $s\in S$ as above.
It is easy to check that $\sum_n x_{s(n)}$ is convergent if and only if $\sum_n t(n)x_n$ is convergent,
where $t:=h(s)$. However,
in general, the $\I$-convergence of an $S$-subseries
need not be equivalent to the $\I$-convergence of the corresponding 0-1 subseries.
This is shown in the following example.

\begin{example} \label{E1}
Let an ideal $\I$ consist of all sets $E\subset\N$ such that $E\cap(2\N-1)\in\on{Fin}$.
Hence $2\N\in\I$. Then $\I$ is of type $F_\sigma$. Note that $\I$ is neither dense nor shift-invariant.
Define a sequence $(x_n)_{n \in \N}$ of reals as $(1,0,-1,1,0,-1,1,0,-1,\dots)$.
Let $t:=\chi_{\N \setminus (3\N+2)}$ and $s:=h^{-1}(t).$ Then 
$$(x_{s(n)})=(1,-1,1,-1,1,-1,\ldots)\quad\mathrm{and}\quad
(t(n)x_n)=(1,0,-1,1,0,-1,1,0,-1,\ldots).$$
 Hence 
$(\sum_{k=1}^n x_{s(k)})_{n \in \N}=(1,0,1,0,1,0,\ldots)$ is $\I$-convergent to $1$. 
On the other hand, 
$$\left(\sum_{k=1}^n t(k)x_k\right)_{n \in \N}=(1,1,0,1,1,0,1,1,0,\ldots)$$ 
is not $\I$-convergent since its restriction to $2\N-1$ contains infinitely many zeros and ones.

Now, let $u:=\chi_{(6\N-5)\cup (6\N-1)\cup 6\N}$ and $v:=h^{-1}(u)$. Then
$$(x_{v(n)})=(1,0,-1,1,0,-1,1,0,-1,\ldots)\quad\mathrm{and}\quad
(u(n)x_n)=(1,0,0,0,0,-1,1,0,0,0,0,-1,\ldots).$$
 Hence 
 $(\sum_{k=1}^n x_{v(k)})_{n \in \N}=(1,1,0,1,1,0,\ldots)$ is not $\I$-convergent while as  
$$\left(\sum_{k=1}^n u(k)x_k\right)_{n \in \N}=(1,1,1,1,1,0,1,1,1,1,1,0,\ldots)$$
is $\I$-convergent to $1$.
\end{example}

\section{Subseries and the Baire category}
Recall that a series $\sum_n x_n$ in a normed space is  {\em unconditionally convergent} if $\sum_n x_{p(n)}$ is convergent for every permutation $p$ of $\N$.
The following well-known characterization (see \cite[Theorem 1.3.2]{KK}) has inspired us to check the Baire category of some sets of series generated
by a given series which is not unconditionally convergent.

\begin{theorem}\label{uncon}
For a series $\sum_n x_n$ in a Banach space, the following conditions are equivalent:
\begin{itemize}
\item[(i)] the series $\sum_n x_n$ converges unconditionally;
\item[(ii)] all subseries of the given series are convergent;
\item[(iii)] all series of the form $\sum_n t(n) x_n$, with $t\in\{-1,1\}^\N$, are convergent.
\end{itemize}
\end{theorem}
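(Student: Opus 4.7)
The plan is to introduce one auxiliary Cauchy-style hub condition
\begin{equation*}
\text{(C)}:\ \forall \varepsilon>0\ \exists N\in\N\ \forall\,\text{finite } F\subset\{N{+}1,N{+}2,\dots\},\quad \bigl\|\textstyle\sum_{n\in F} x_n\bigr\|<\varepsilon,
\end{equation*}
and to show that each of (i), (ii), (iii) is separately equivalent to (C). Completeness of $X$ enters only through the fact that (C) is the Cauchy condition for sums indexed by arbitrary finite subsets of a tail.

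For the spoke (ii)$\Leftrightarrow$(C): if (C) holds, then for any infinite $A\subset\N$ the partial sums of the 0-1 subseries $\sum_n\chi_A(n)x_n$ (and hence, by the observation in Section~2, of the corresponding $S$-subseries) are Cauchy; conversely, if (C) fails, I would inductively pick pairwise disjoint finite sets $F_1<F_2<\dots$ with $\bigl\|\sum_{n\in F_k}x_n\bigr\|\geq\varepsilon$ for some fixed $\varepsilon>0$, and the subseries indexed by $\bigcup_k F_k$ then fails the Cauchy criterion. For (iii)$\Leftrightarrow$(C): assuming (C), split any finite tail interval $(m,M]\cap\N$ into its $t{=}{+}1$ and $t{=}{-}1$ parts and apply (C) to each half, giving $\bigl\|\sum_{i=m+1}^M t(i)x_i\bigr\|<2\varepsilon$; conversely, (iii)$\Rightarrow$(ii), because for $A\subset\N$ the partial sums of the 0-1 subseries $\sum_n\chi_A(n)x_n$ equal half the sum of the partial sums of $\sum_n x_n$ (obtained from $t\equiv 1$) and of $\sum_n(2\chi_A(n){-}1)x_n$, both of which converge by (iii). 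Combined with (ii)$\Rightarrow$(C) above, the loop closes.

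The remaining equivalence (i)$\Leftrightarrow$(C) carries the main difficulty. The direction (C)$\Rightarrow$(i) is routine: given a permutation $p$ and $\varepsilon>0$, take $N$ from (C) and pick $M$ with $\{1,\dots,N\}\subset p[\{1,\dots,M\}]$; for $n>m\geq M$ the set $\{p(m{+}1),\dots,p(n)\}$ is a finite subset of $\{N{+}1,N{+}2,\dots\}$, so the norm of its $x$-sum is $<\varepsilon$. The hard direction (i)$\Rightarrow$(C) requires, contrapositively, turning a failure of (C) into a divergent rearrangement. Using the blocks $F_k$ produced above, I would enumerate $\N\setminus\bigcup_k F_k=\{a_1<a_2<\dots\}$ and define $p$ by the interleaving pattern: list $a_1,\dots,a_{m_1}$, then $F_1$ in increasing order, then $a_{m_1+1},\dots,a_{m_2}$, then $F_2$, and so on, where each $m_k$ is chosen large enough that $\{1,\dots,\max F_k\}\setminus\bigcup_{j\leq k}F_j\subset\{a_1,\dots,a_{m_k}\}$. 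The delicate bookkeeping is to verify that $p$ is indeed a bijection $\N\to\N$ and that the partial sum of $\sum_n x_{p(n)}$ immediately after inserting each block $F_k$ differs from the partial sum immediately before by exactly $\bigl\|\sum_{n\in F_k}x_n\bigr\|\geq\varepsilon$, so that $\sum_n x_{p(n)}$ cannot converge and (i) is violated. With all three spokes in place, (i), (ii), (iii) are mutually equivalent via (C).
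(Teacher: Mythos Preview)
The paper does not prove this theorem at all; it is stated as a well-known characterization and simply cited from \cite[Theorem 1.3.2]{KK}. So there is no in-paper proof to compare against, and your proposal supplies what the authors merely import.

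Your argument via the hub Cauchy condition (C) is the standard route and is correct in substance. Two small points worth tightening in the $\lnot$(C)$\Rightarrow\lnot$(i) spoke: first, say explicitly that the $m_k$ are chosen strictly increasing (your ``large enough'' permits this, but it should be stated so that every $a_i$ is eventually listed and $p$ is a genuine bijection); second, note the degenerate case where $\N\setminus\bigcup_k F_k$ is finite or empty, in which case the interleaving collapses but the conclusion is immediate since listing the finitely many $a_i$'s first and then the $F_k$'s in order already gives a rearrangement whose partial sums jump by at least $\varepsilon$ across each block. With those remarks added, the bookkeeping for the bijection and the $\varepsilon$-jump is exactly as you describe, and all three spokes close.
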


Denote by $P$ the set of all bijections in $\N^\N$. 
Our idea is the following. Let $\sum_n x_n$ be a series which does not converge unconditionally.
Then Theorem \ref{uncon} produces four examples of ``bad'' series which are not convergent: 
$$\sum_n x_{p(n)}\mbox{ with }p\in P,\quad
\sum_n x_{s(n)}\mbox{ with }s\in S,$$ $$ \sum_n t(n)x_n\mbox{ with }t\in\{0,1\}^\N,\quad  \sum_n u(n)x_n\mbox{ with }u\in\{-1,1\}^\N .$$
However, some of series of these four kinds can be convergent. Even possibly more of them are $\I$-convergent
where $\I$ is a regular (in some sense) ideal on $\N$. So, it is interesting to check the Baire category of the sets
of these good series of four kinds.  The respective series are coded by parameters from $P$, $S$, $\{0,1\}^\N$, $\{-1,1\}^\N$
which are Polish spaces with the respective product topologies (in the case of $P$ and $S$ are $G_\delta$ subsets of $\N^\N$, and we use the Alexandrov theorem). The cases of $P$ and $S$ were investigated in \cite{BPW}. It was concluded that the respective sets of ``good'' series are meager (that is, of the first Baire category) under the assumption that $\I$ is a shift-invariant ideal with the Baire property. In the present paper, we will check analogous phenomena for $\{0,1\}^\N$
and $\{-1,1\}^\N$. Basic tools and ideas that we propose are similar to those used in \cite{BPW}. However,
there are some essential differences in comparision with the cases of $S$ and $P$.

Now, we are able to prove a theorem on the meagerness of the set of 0-1 subseries that are $\I$-convergent.
This is an analogous statement to the respective result from \cite{BPW} dealing with $S$-subseries but,
what is interesting, in the 0-1 subseries case, we do not require the shift-invariance of $\I$.

\begin{theorem} \label{zero-one}
Let $\sum_n x_n$ be a series in a Banach space $X$, which is not unconditionally convergent. Let $\I$ be an ideal on $\N$ with the Baire property. Then the set
$$A(\I):=\left\{t \in \{0,1\}^{\N} \colon \sum_n t(n)x_n \textrm{ is } \I\textrm{-convergent}\right\}$$
is  meager  in $\{0,1\}^\N$.
\end{theorem}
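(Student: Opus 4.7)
The strategy is to exhibit $A(\I)$ as contained in a countable union of closed, nowhere-dense subsets of $\{0,1\}^\N$, using the Jalali--Naini--Talagrand characterization combined with a two-case ``bump'' construction that cannot be cancelled by the prefix coming from the basic cylinder. First, since $\sum_n x_n$ fails to be unconditionally convergent, Theorem \ref{uncon} shows that the unconditional Cauchy criterion fails, so there exists $\ve_0>0$ such that for every $M\in\N$ one can find a finite $F\subset\N$ with $\min F>M$ and $\|\sum_{l\in F}x_l\|\geq\ve_0$. Apply Lemma \ref{Tal} to fix a sequence $n_1<n_2<\cdots$ with the property that no set in $\I$ contains infinitely many of the intervals $I_i:=[n_i,n_{i+1})\cap\N$.

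Fix $k\in\N$ with $1/k<\ve_0/2$, and for each $m,N\in\N$ let
$$T_{m,N}:=\left\{t\in\{0,1\}^\N \colon \forall i\geq N\ \exists j\in I_i\ \left\|\sum_{l=m+1}^j t(l)x_l\right\|\leq 1/k\right\}.$$
Each inner condition depends on only finitely many coordinates of $t$, hence is clopen, so $T_{m,N}$ is closed. If $t\in A(\I)$, Lemma \ref{Cau} with $\ve=1/k$ yields $m$ such that $S:=\{j>m\colon\|\sum_{l=m+1}^j t(l)x_l\|>1/k\}\in\I$, and then Lemma \ref{Tal} furnishes an $N$ such that no $I_i$ with $i\geq N$ is contained in $S$, i.e.\ $t\in T_{m,N}$. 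Therefore $A(\I)\subseteq\bigcup_{m,N}T_{m,N}$, and it suffices to verify that each $T_{m,N}$ has empty interior.

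Given a basic clopen $[\sigma]$ of length $L$, I build $t\in[\sigma]$ avoiding $T_{m,N}$. Pick a finite $F$ with $\min F>\max(L,m)$ and $\|\sum_{l\in F}x_l\|\geq\ve_0$, then pick $i\geq N$ with $n_i>\max F$. Set $u:=\sum_{l=m+1}^L \sigma(l)x_l$ (interpreted as $0$ if $L\leq m$). By the triangle inequality applied to $u$ and $u+\sum_{l\in F}x_l$, at least one of these vectors has norm $\geq\ve_0/2$. Extend $\sigma$ to $t$ by setting either $t(l)=0$ for all $l>L$ (first case) or $t(l)=\chi_F(l)$ for $L<l<n_{i+1}$ and arbitrary beyond (second case). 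In either case $t$ vanishes on $I_i$, so for every $j\in I_i$ the partial sum $\sum_{l=m+1}^j t(l)x_l$ equals either $u$ or $u+\sum_{l\in F}x_l$, whose norm exceeds $1/k$. Hence $t\notin T_{m,N}$.

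The principal difficulty I anticipate is that the condition $t\notin T_{m,N}$ requires an entire interval $I_i$ on which the partial sums simultaneously exceed $1/k$, not merely one index; without this one cannot invoke Lemma \ref{Tal}. The dichotomy between $u$ and $u+\sum_{l\in F}x_l$ neutralises possible cancellation of the prefix contribution by the bump we add, while forcing $t$ to vanish on $I_i$ freezes the partial sums across $I_i$ at a single large vector. Notably the construction never shifts the ideal, which is why (in contrast to the $S$-subseries setting of \cite{BPW}) shift-invariance of $\I$ plays no role here.
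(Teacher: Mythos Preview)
Your proof is correct and follows essentially the same route as the paper: both arguments combine Lemma~\ref{Tal} with the ideal Cauchy criterion (Lemma~\ref{Cau}) to reduce the problem to showing that, on any basic cylinder, one can extend the prefix so that the partial sums stay large across an entire Talagrand interval~$I_i$. The only cosmetic differences are that the paper works with the complementary open dense sets $A_{m,l}$ rather than your closed nowhere-dense $T_{m,N}$, and it uses a fixed divergent $0$--$1$ subseries $\sum_n u(n)x_n$ in place of your more symmetric finite-set formulation of the unconditional Cauchy failure; your dichotomy between $u$ and $u+\sum_{l\in F}x_l$ plays the same role as the paper's Case~1/Case~2 split.
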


\begin{proof}
We follow the method used in the proof of Theorem 3.4 in \cite{BPW}. 
First we use Theorem \ref{uncon} to get a sequence $u \in \{0,1\}^{\N}$ such that $\sum_n u(n)x_{n}$ is divergent. 
Then the Cauchy condition does not hold, so we can find $\varepsilon>0$ such that
\begin{equation} \label{oto}
\forall{m \in \N}  \ \exists{\tau_m>m}  \ \left\|\sum_{i=m+1}^{\tau_m} u(i)x_i \right\|>2\varepsilon.
\end{equation}
Fix a sequence $(n_k)_{k \in \N}$ witnessing that $\I$ has the Baire property according to Lemma \ref{Tal}. 
Let $I_k:=[n_k,n_{k+1}) \cap \N$ for $k\in\N$. Denote in short $A:=A(\I)$.
For any $m \in \N$ and $l>m$ let us define
$$A_{m,l}:=\left\{t \in \{0,1\}^{\N} \colon \exists{k \in \N}\,\left(n_k>l \wedge \forall{j \in I_k}\, \left\|\sum_{i=m+1}^j t(i)x_i \right\|>\varepsilon\right)\right\}.$$
We have $ \bigcap_{m \in \N} \bigcap_{l>m} A_{m,l} \subset \{0,1\}^{\N} \setminus A$. Indeed, if $t \in \bigcap_{m \in \N} \bigcap_{l>m} A_{m,l}$, then for any $m\in\N$ there are infinitely many intervals $I_k$ with $n_k>m$ such that 
$$\left\|\sum_{i=m+1}^j t(i)x_i \right\|>\varepsilon\textrm{ for every }j\in I_k.$$
Then Lemma \ref{Tal} implies that  $\{ j>m\colon\|\sum_{i=m+1}^j x_i\|>\ve\}$ is not in $\I$ for each $m\in\N$
which by Lemma \ref{Cau} proves that $t \notin A$.

It suffices to show that, for any $m \in \N$ and $l>m$,
the set $A_{m,l}$ contains an open dense set. Then $\bigcap_{m \in \N} \bigcap_{l>m} A_{m,l}$ is comeager
which yields the assertion.
 
So, fix $m \in \N$ and $l>m$.
Consider an open set $U$ from the standard base $\B$ of the product topology in $\{0,1\}^\N$
$$U:=\{t\in \{0,1\}^\N\colon t\mbox{ extends }(t_1,\dots, t_r)\} $$
where $t_1,\dots, t_r\in\{0,1\}$. We may assume that $r\geq l$.
We will show that there is a basic open set $V=V_U$ with $V\subset U\cap A_{m,l}$. (Then $\bigcup_{U\in\B} V_U$ is the desired open dense set.)
 We will extend $(t_1,\ldots,t_r)$ to an appriopriate sequence $(t_1,\ldots,t_q)$. Pick the smallest $w>r$ with $u(w)=1$. Put $t_{i}:=0$ for $i=r+1,\ldots ,w-1$, and $t_{w}:=1$.
Consider two cases:

Case 1. $\left\| \sum_{i=m+1}^w t_ix_i \right\|>\varepsilon$. 
Pick the smallest $n_k$ in a sequence $(n_i)$ with $n_k>w$.
Put $q=n_{k+1}-1$ and $t_{i}:=0$ for $i=w+1,\dots ,q$. Let
$$V:=\left\{t \in \{0,1\}^\N \colon t\textrm{ extends } (t_1,\dots , t_q)\right\} .$$
Clearly, $V\subset U$ and for any $t\in V$ and $j\in I_k$, we have
$$\left\| \sum_{i=m+1}^j t(i)x_i \right\| =  \left\|\sum_{i=m+1}^w t_ix_i\right\| >\varepsilon.$$ 
Hence, $V \subset U \cap A_{m,l}$ as desired.

Case 2. $\left\| \sum_{i=m+1}^w t_ix_i \right\| \leq \varepsilon$. 
By (\ref{oto}) pick $\tau_w>w$ with
$\left\|\sum_{i=w+1}^{\tau_w} u(i)x_i \right\|>2\varepsilon$. Put $t_{w+j}=u(w+j)$ for $j=1,\ldots,\tau_w-w$. Then $\left\|\sum_{i=w+1}^{\tau_w}t_ix_i\right\|>2\varepsilon$. Hence
$$\left\|\sum_{i=m+1}^{\tau_w}t_ix_i\right\| \geq -\left\|\sum_{i=m+1}^{w} t_ix_i\right\|+\left\|\sum_{i=w+1}^{\tau_w}t_ix_i\right\| \geq -\varepsilon+2\varepsilon=\varepsilon.$$
Now, we are in the Case 1 where $w$ is replaced by $\tau_w$. 
\end{proof}

The set $A(\I)$ in the above theorem depends in fact on a given series, so if a series is not fixed, we will denote it by $A(\I,(x_n))$.
We can state the following characterization (cf. \cite[Corollary 3.6]{BPW}).

\begin{corollary} \label{Cha}
Let $\I$ be an ideal on $\N$ with the Baire property. A series $\sum_n x_n$ in a Banach space
is unconditionally convergent if and only if $A(\I, (x_n))$ is nonmeager in $\{0,1\}^\N$. 
\end{corollary}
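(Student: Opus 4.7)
The plan is to prove both implications by combining Theorem \ref{zero-one} with Theorem \ref{uncon}, with essentially no extra work beyond invoking these results and the remark that classical convergence implies $\I$-convergence.

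For the ``only if'' direction I would argue directly. Assume $\sum_n x_n$ is unconditionally convergent. By the equivalence of (i) and (ii) in Theorem \ref{uncon}, every subseries converges in the usual sense. As noted in Section 1, this applies to all 0-1 subseries: if $t\in\{0,1\}^\N$ has infinitely many ones, the series $\sum_n t(n)x_n$ corresponds via the embedding $h$ to an $S$-subseries, and Section 2 observes that these two notions of subseries agree for classical convergence; if $t$ has only finitely many ones, $\sum_n t(n)x_n$ is a finite sum and trivially convergent. In either case $\sum_n t(n)x_n$ converges in the norm, hence it is $\I$-convergent (since ordinary convergence implies $\I$-convergence for any ideal $\I$ containing $\on{Fin}$). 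Therefore $A(\I,(x_n))=\{0,1\}^\N$, which is nonmeager in itself by the Baire category theorem applied to the Polish space $\{0,1\}^\N$.

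For the ``if'' direction I would argue by contraposition. Suppose $\sum_n x_n$ is not unconditionally convergent. Then Theorem \ref{zero-one}, whose hypotheses (the Baire property of $\I$ and the failure of unconditional convergence) are precisely those present, asserts that $A(\I,(x_n))$ is meager in $\{0,1\}^\N$. This is exactly what we need.

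The argument is essentially a repackaging of the preceding theorem together with Theorem \ref{uncon}, so I do not foresee any obstacle; the only point worth being careful about is that ``all subseries converge'' in item (ii) of Theorem \ref{uncon} is interpreted in the $S$-subseries sense, and one must observe (as done in Section 2) that for ordinary convergence this is equivalent to convergence of every 0-1 subseries, so the conclusion indeed covers all $t\in\{0,1\}^\N$.
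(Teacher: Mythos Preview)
Your proposal is correct and follows essentially the same approach as the paper: for the ``only if'' direction you use Theorem \ref{uncon} to conclude $A(\I,(x_n))=\{0,1\}^\N$ (the paper phrases this as $A(\on{Fin},(x_n))=\{0,1\}^\N$ and then uses $\on{Fin}\subset\I$), and for the ``if'' direction you apply Theorem \ref{zero-one} by contraposition, exactly as the paper does. Your version is slightly more explicit about the equivalence of $S$-subseries and 0-1 subseries for ordinary convergence, but the underlying argument is the same.
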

\begin{proof} The necessity is obvious since, $\sum_n x_n$ is unconditionally convergent
if and only if $A(\on{Fin},(x_n))$ equals $\{0,1\}^\N$ (by Theorem \ref{uncon}) which implies that $A(\I, (x_n))=\{0,1\}^\N$,
so  $A(\I, (x_n))$ is clearly nonmeager.
The sufficiency follows directly from Theorem \ref{zero-one}.
\end{proof}

The next theorem concerns $\pm 1$-modifications of series which is not unconditionally convergent.
In fact, we will obtain this result as a consequence of Theorem \ref{zero-one}.

\begin{theorem} \label{1-1}
Let $\sum_n x_n$ be a series in a Banach space $X$, which is not unconditionally convergent. Let $\I$ be an ideal on $\N$ with the Baire property. Then the set
$$B(\I):=\left\{t \in \{-1,1\}^{\N} \colon \sum_n t(n)x_n \textrm{ is } \I\textrm{-convergent}\right\}$$
is  meager  in $\{-1,1\}^\N$.
\end{theorem}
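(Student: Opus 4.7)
The plan is to mimic the proof of Theorem \ref{zero-one}, now working in $\{-1,1\}^\N$ in place of $\{0,1\}^\N$. First I would invoke Theorem \ref{uncon}(iii) to pick $u\in\{-1,1\}^\N$ with $\sum_n u(n)x_n$ divergent, and then an $\varepsilon>0$ satisfying the analogue of (\ref{oto}). After fixing a sequence $(n_k)$ from Lemma \ref{Tal} and the intervals $I_k:=[n_k,n_{k+1})\cap\N$, I define
$$B_{m,l}:=\left\{t\in\{-1,1\}^\N\colon \exists k\in\N\,\left(n_k>l\wedge\forall j\in I_k\,\left\|\sum_{i=m+1}^j t(i)x_i\right\|>\varepsilon\right)\right\}.$$
The argument of Theorem \ref{zero-one} showing $\bigcap_{m\in\N}\bigcap_{l>m} B_{m,l}\subset\{-1,1\}^\N\setminus B(\I)$ transfers verbatim via Lemmas \ref{Tal} and \ref{Cau}, so the task reduces to proving that each $B_{m,l}$ contains an open dense set.

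For density, I would fix a basic open $U\subset\{-1,1\}^\N$ given by a prefix $(t_1,\dots,t_r)$ with $r\geq l$, and aim to construct a basic open $V\subset U\cap B_{m,l}$. The ``Case 2'' step of the original proof goes through unchanged: iterating (\ref{oto}) and setting $t_{r+1},\dots,t_w$ equal to the corresponding values of $u$ produces an extension with $\left\|\sum_{i=m+1}^w t_ix_i\right\|>\varepsilon$, and $u(i)\in\{-1,1\}$ poses no problem. The only genuinely new ingredient is the replacement of the ``Case 1'' padding $t_i=0$ used in the original proof to freeze the partial sums across $I_k$, since $0\notin\{-1,1\}$.

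The trick is the elementary inequality $\max(\|a+b\|,\|a-b\|)\geq\|a\|$ for every $a,b\in X$, which follows from $2a=(a+b)+(a-b)$ and the triangle inequality. Starting from $S_w:=\sum_{i=m+1}^w t_ix_i$ with $\|S_w\|>\varepsilon$, I would extend greedily: having chosen $t_{w+1},\dots,t_j$ with $\|S_j\|>\varepsilon$, pick $t_{j+1}\in\{-1,1\}$ so that $\|S_j+t_{j+1}x_{j+1}\|$ is maximal; the inequality then gives $\|S_{j+1}\|\geq\|S_j\|>\varepsilon$. Iterating up to $j=q:=n_{k+1}-1$, where $k$ is the smallest index with $n_k>w$, yields a prefix $(t_1,\dots,t_q)$ with $\left\|\sum_{i=m+1}^j t_ix_i\right\|>\varepsilon$ for every $j\in I_k$, and letting $V$ be the set of all extensions of this prefix gives $V\subset U\cap B_{m,l}$. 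The main obstacle is precisely this Case 1 adaptation: the $\{0,1\}^\N$ proof exploits the option to set $t_i=0$ in an essential way, and the greedy $\pm1$-extension justified by the displayed inequality is what allows the argument to pass to $\{-1,1\}^\N$, thereby yielding Theorem \ref{1-1} as a consequence of the method of Theorem \ref{zero-one}.
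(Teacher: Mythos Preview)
Your argument is correct, but it follows a genuinely different route from the paper's. The paper does not rerun the density argument inside $\{-1,1\}^\N$; instead it reduces to (a mild variant of) Theorem~\ref{zero-one} by a translation. Assuming $B(\I)\neq\emptyset$, the authors fix $w\in B(\I)$, set $M_1:=w^{-1}[\{-1\}]$, $M_2:=w^{-1}[\{1\}]$, and consider the homeomorphism $\varphi\colon\{-1,1\}^\N\to Z:=\{-2,0\}^{M_1}\times\{0,2\}^{M_2}$ given by $\varphi(t)=t+w$. Since the sum of two $\I$-convergent series is $\I$-convergent, $B(\I)=\varphi^{-1}[E(\I)]$ where $E(\I):=\{z\in Z:\sum_n z(n)x_n\text{ is }\I\text{-convergent}\}$. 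The space $Z$ contains $0$ as a coordinate value, so the original zero-padding of Case~1 works there verbatim; one only has to furnish a divergent series with coefficients in $Z$, which they obtain from a divergent $0$-$1$ subseries restricted to $M_1$ or $M_2$.

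Your direct approach avoids the case split on $B(\I)=\emptyset$, the appeal to additivity of $\I$-limits, and the auxiliary space $Z$; the greedy $\pm1$-extension via $\max(\|a+b\|,\|a-b\|)\ge\|a\|$ is exactly the right substitute for zero-padding and in fact yields a monotone lower bound $\|S_{j+1}\|\ge\|S_j\|$ along the whole interval $I_k$, which is slightly stronger than what the original Case~1 gives. The paper's route, on the other hand, makes the dependence on Theorem~\ref{zero-one} structural rather than merely methodological and shows more transparently why no new Baire-category computation is needed. One small wording point: in your Case~2 you do not need to ``iterate'' condition~(\ref{oto}); a single application at the index $r$ (set $t_i:=u(i)$ for $r<i\le\tau_r$) already gives $\|\sum_{i=m+1}^{\tau_r} t_ix_i\|>2\varepsilon-\varepsilon=\varepsilon$, after which your Case~1 applies.
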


\begin{proof}
By Theorem \ref{uncon}, choose a sequence $u \in \{0,1\}^{\N}$ such that $\sum_n u(n)x_{n}$ is divergent.
Fix a partition $\{M_1,M_2\}$ of $\N$ (one of the sets $M_1, M_2$ may be empty). 
At least one of the series $\sum_{n\in M_1} u(n)x_{n}$, $\sum_{n\in M_2} u(n)x_{n}$ is divergent
(we use the convention $\sum_{n\in M}y_n:=\sum_{n\in\N}y_n\chi_{M}(n)$). Assume that the first series
is divergent -- the second case is analogous. Put $v(n):=-2u(n)\chi_{M_1}(n)$ for $n\in\N$. Then 
$v\in Z:=\{-2,0\}^{M_1}\times \{0,2\}^{M_2}$ and the series $\sum_{n\in\N}v(n)x_n$ is
divergent. By a simple modification of the proof of Theorem \ref{zero-one} we conclude that the set
$$E(\I):=\left\{ z\in Z\colon \sum_n z(n) x_n\textrm{ is } \I\textrm{-convergent}\right\}$$
is  meager  in $Z$.

Assume a nontrivial case when $B(\I)\neq\emptyset$, and fix $w\in B(\I)$. Consider the set $Z$ as above with
$M_1:=w^{-1}[\{-1\}]$ and $M_2:=w^{-1}[\{ 1\}]$. Note that the mapping $\varphi\colon\{-1,1\}^\N\to Z$,
given by $\varphi(t):=t+w$, $t\in\{-1,1\}$, is a homeomorphism. Also, $B(\I)=\varphi^{-1}[E(\I)]$ since the sum (and the difference) of two $\I$-convergent series
is $\I$-convergent, cf. \cite{KMSS}. Since $E(\I)$ is meager in $Z$, it follows that $B(\I)$ is meager in $\{-1,1\}^\N$.
\end{proof}

The above result is a generalization of statements obtained in \cite[Theorem 2.2]{Di1} and \cite[Theorem 2.1]{Di2} and
dealing with $\I:=\on{Fin}$.
We have also the following counterpart of Corollary \ref{Cha} with an analogous proof.

\begin{corollary} \label{ChaCha}
Let $\I$ be an ideal on $\N$ with the Baire property. A series $\sum_n x_n$ in a Banach space
is unconditionally convergent if and only if $B(\I, (x_n))$ is nonmeager in $\{-1,1\}^\N$. 
\end{corollary}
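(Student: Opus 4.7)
The plan is to mirror the short proof of Corollary \ref{Cha} verbatim, simply substituting Theorem \ref{1-1} for Theorem \ref{zero-one} and substituting condition (iii) of Theorem \ref{uncon} for condition (ii). The statement is genuinely a bookkeeping corollary that packages Theorem \ref{uncon}(iii) together with Theorem \ref{1-1} into a single characterization.

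For the necessity direction, I would suppose that $\sum_n x_n$ converges unconditionally. By the equivalence (i) $\Leftrightarrow$ (iii) in Theorem \ref{uncon}, every signed series $\sum_n t(n)x_n$ with $t \in \{-1,1\}^\N$ is convergent in the classical sense. Since ordinary convergence implies $\I$-convergence for any ideal $\I$, this gives $B(\I,(x_n)) = \{-1,1\}^\N$. The whole Polish space $\{-1,1\}^\N$ is certainly nonmeager in itself by the Baire category theorem, so the necessity holds. One could also observe, as in the proof of Corollary \ref{Cha}, that $B(\on{Fin},(x_n))=\{-1,1\}^\N$ trivially implies $B(\I,(x_n))=\{-1,1\}^\N$, since $\on{Fin}\subset\I$.

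For the sufficiency direction, I would argue by contraposition. If $\sum_n x_n$ fails to be unconditionally convergent, then by Theorem \ref{1-1} (whose hypotheses are exactly that $\I$ has the Baire property and $\sum_n x_n$ is not unconditionally convergent) the set $B(\I,(x_n))$ is meager in $\{-1,1\}^\N$, contradicting nonmeagerness. Hence $B(\I,(x_n))$ nonmeager forces $\sum_n x_n$ to be unconditionally convergent.

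There is no real obstacle: the work has already been done in Theorems \ref{uncon} and \ref{1-1}. The only point to double-check is that $\{-1,1\}^\N$ is naturally identified with a Polish space (the Cantor space under the product topology) so that the notion of meagerness used in Theorem \ref{1-1} is exactly the one appearing in the corollary, and that the proof of Theorem \ref{1-1} does produce meagerness in the ambient space $\{-1,1\}^\N$ rather than in some subset. Both points are clear from the setup preceding Theorem \ref{1-1}.
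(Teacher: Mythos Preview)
Your proposal is correct and matches the paper's approach exactly: the paper simply states that Corollary \ref{ChaCha} is the counterpart of Corollary \ref{Cha} with an analogous proof, and your argument is precisely that analogue, substituting Theorem \ref{1-1} and condition (iii) of Theorem \ref{uncon} in the appropriate places.
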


\section{Subseries from the measure viewpoint}
Let $\nu(\{0\})=1/2=\nu(\{1\})$, and consider the product measure $\lambda$ on
$\{0,1\}^\N$ generated by $\nu$ (this is the Haar measure on the Cantor group).
Given an ideal $\I$ on $\N$, and an $\I$-divergent series $\sum_n x_n$
in a Banach space $X$, one can ask whether the set $A(\I)$ considered
in Theorem \ref{zero-one} is measurable. The value $\lambda(A(\I))$ would be also interesting.
The answer in the cases when $\I:=\on{Fin}$, and $\I:=\I_d, X:=\R$ were given in \cite{DMS} and \cite{DST}, respectively,
where it was shown that $\lambda(A(\I))=0$. The approach in \cite{DMS} and \cite{DST} is different but
equivalent to that of ours since the authors of \cite {DMS} and \cite{DST} use the Lebesgue measure on $[0,1]$, 
and $\{0,1\}^\N$ is 
transformed into $[0,1]$ via binary expansions.

We extend the above results to a more general setting, using similar arguments
and auxiliary facts taken from \cite{BGW}.
We say that $E\subset \{0,1\}^\N$ is a {\em tail set} if, whenever $x\in E$ and $y\in\{0,1\}^N$ differs from $x$ in a finite number of coordinates, then $y\in E$. By the 0-1 law (cf. \cite[Theorem 21.3]{Ox}), if a tail set is measurable, its measure is either $0$ or $1$.

\begin{theorem} \label{meas}
Let $\I$ be an ideal on $\N$, which is analytic or coanalytic. Then for any series $\sum_n x_n$ in a Banach space, 
$\lambda(A(\I))$ is either 0 or 1. If $\sum_n x_n$ is $\I$-divergent, then $\lambda(A(\I))=0$. 
\end{theorem}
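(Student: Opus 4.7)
The plan is to (a) show that $A(\I)$ is a $\lambda$-measurable tail set, so that Oxtoby's $0$-$1$ law (Theorem 21.3 of \cite{Ox}) forces $\lambda(A(\I))\in\{0,1\}$, and (b) rule out $\lambda(A(\I))=1$ when $\sum_n x_n$ is $\I$-divergent by a flip-symmetry argument. That $A(\I)$ is a tail set is immediate: if $t,s\in\{0,1\}^\N$ agree from some index on, the partial sums of $\sum_n t(n)x_n$ and $\sum_n s(n)x_n$ eventually differ by a fixed vector, so one series is $\I$-convergent iff the other is.

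For measurability, I would transfer the descriptive complexity of $\I$ to $A(\I)$ via continuous pullbacks coming from Lemma \ref{Cau}. For $k,m\in\N$ define $f_{k,m}\colon\{0,1\}^\N\to\{0,1\}^\N$ so that $f_{k,m}(t)(j)=1$ exactly when $j>m$ and $\left\|\sum_{i=m+1}^{j}t(i)x_i\right\|>1/k$. Each output coordinate depends on only finitely many input coordinates, so $f_{k,m}$ is continuous, and Lemma \ref{Cau} yields
\[
A(\I)=\bigcap_{k\in\N}\bigcup_{m\in\N} f_{k,m}^{-1}[\I].
\]
If $\I$ is analytic, this representation exhibits $A(\I)$ as analytic; if $\I$ is coanalytic, the dual computation (using that countable unions of coanalytic sets are coanalytic, because countable intersections of analytic sets are analytic) shows $A(\I)$ is coanalytic. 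In either case $A(\I)$ is universally measurable, hence $\lambda$-measurable.

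To finish, suppose $\sum_n x_n$ is $\I$-divergent but $\lambda(A(\I))=1$. The coordinatewise flip $\sigma(t):=(1-t(n))_{n\in\N}$ is a $\lambda$-preserving involutive homeomorphism of $\{0,1\}^\N$, so $\sigma[A(\I)]$ also has full $\lambda$-measure and we may pick some $t\in A(\I)\cap\sigma[A(\I)]$. Then both $\sum_n t(n)x_n$ and $\sum_n(1-t(n))x_n$ are $\I$-convergent, whence their termwise sum $\sum_n x_n$ is $\I$-convergent (the $\I$-convergent series form a linear space; cf. \cite{KMSS}), contradicting the hypothesis. The main obstacle is the measurability step: one has to confirm that the descriptive complexity of $\I$ actually propagates through the Cauchy-type formula of Lemma \ref{Cau}. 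The coanalytic case is the more delicate one, but the De Morgan identity together with the closure of the analytic class under countable intersections makes it go through.
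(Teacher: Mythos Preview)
Your proof is correct and follows essentially the same route as the paper: measurability via the Cauchy-type decomposition of Lemma~\ref{Cau} and continuous pullbacks $f_{k,m}^{-1}[\I]$, the tail-set $0$--$1$ law, and the flip $t\mapsto \pmb 1-t$ together with linearity of $\I$-limits to exclude $\lambda(A(\I))=1$ in the $\I$-divergent case. Your treatment of the coanalytic case (via De~Morgan and closure of $\Sigma^1_1$ under countable intersections) is more explicit than the paper's one-line ``so is $A_{\ve m}$ \dots consequently $A$ is measurable,'' but the underlying argument is the same.
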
 
\begin{proof}
Let $\Q_+$ stand for the set of positive rationals. We write in short $A:=A(\I)$. By Lemma \ref{Cau}, we have
$A=\bigcap_{\ve\in\Q_+}\bigcup_{m\in\N} A_{\ve m}$ where
$$A_{\ve m}:=\left\{ t\in\{0,1\}^\N\colon\left\{ j>m\colon\left\|\sum_{i=m+1}^j t(i)x_i\right\|>\ve\right\}\in\I\right\} .$$
As in \cite[Proposition 3.1]{BGW}, we observe that the function $f_{\ve m}\colon\{0,1\}^\N\to\{0,1\}^\N$, given by 
$f_{\ve m}:=\chi_{\left\{ j>m\colon\left\|\sum_{i=m+1}^j t(i)x_i\right\|>\ve\right\}}$,
is continuous. Since $A_{\ve m}=f^{-1}_{\ve m}[\I]$, and $\I$ is analytic or coanalytic, so is $A_{\ve m}$,
for any $\ve$ and $m$. Consequently, $A$ is measurable.
Then observe that $A$ is a tail set, hence either $\lambda(A)=0$ or $\lambda(A)=1$.
Let $\sum_n x_n$ be $\I$-divergent and suppose that $\lambda(A)=1$. 
Let $\varphi(t):=\pmb 1-t$ for $t\in\{0,1\}^\N$ where $\pmb 1:=(1,1,\dots)$. 
Since $\lambda$ is a Haar measure, we have $\lambda (\varphi[A])=1$.
Pick $s\in A\cap\varphi[A]$ and let $s=\varphi(t)$ where $t\in A$. 
Since $s,t\in A$, the series $\sum_n s(n)x_n$ and $\sum_n t(n)x_n$ are $\I$-convergent.
This implies that their sum $\sum_n x_n$ is $\I$-convergent (cf. \cite[Theorem 2.1]{KMSS}), a contradiction.
\end{proof}

From \cite{DST} it follows that, for $\I:=\I_d$, the second assertion of 
Theorem \ref{meas} does not hold if we replace $\I$-divergence of $\sum_n x_n$ by its divergence.
Namely, in \cite[Example 2.4]{DST}, the authors show a divergent series with real terms such that 
$\lambda(A({\on{Fin}}))=0$ while as
$\lambda(A(\I_d))=1$. In Theorem \ref{Slovak}, we will use a similar method to obtain the same effect for a wide class of  ideals on $\N$ that contains $\I_d$. First, let us describe this class.

We say that an ideal $\I$ on $\N$ has the {\em property of long intervals} (in short, PLI)
if,  there exists a sequence $(m(n))\in\N^\N$ such that $\bigcup_{n\in\N}\{m(n),m(n)+1,\ldots,m(n)+n-1\} \in\I$. 
An equivalent version of this property states that there exist a set $E\in\I$ and a sequence $(m(n))\in\N^\N$ such that 
$E\supset \bigcup_{n\in\N}\{m(n),m(n)+1,\ldots,m(n)+n-1\}$.
We then say that $E$ is a {\em witness} of PLI for $\I$.
Note that an ideal $\I$ has the property of long intervals if and only if the dual filter has the unbounded gap property \cite{L}.
We omit an easy proof of another equivalent formulation of this property.

\begin{fact}  \label{FFact}
An ideal $\I$ on $\N$ has the property of long intervals if and only if,
for every increasing sequence $(n_k)$ of positive integers,  there is an increasing sequence $(m_k)$ of positive integers such that 
$$\bigcup_{k\in\N}\{m_k,m_k+1,\ldots,m_k+n_k-1\} \in\I\;\textrm{ and }\; m_k+n_k-1<m_{k+1}\;\textrm{ for all }k\in\N .$$
\end{fact}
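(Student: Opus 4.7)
The plan is to prove the two implications separately, each essentially by unwinding the definitions, with the only nontrivial step being an unboundedness observation about the witnessing sequence. The ``$\Leftarrow$'' direction I would dispatch at once by applying the hypothesis to the specific increasing sequence $n_k := k$; the resulting $(m_k)$ is exactly the sequence $(m(n))$ demanded by the definition of PLI.

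For ``$\Rightarrow$'', I start from a witnessing set $E=\bigcup_{n\in\N}\{m(n),m(n)+1,\dots,m(n)+n-1\}\in\I$ and a given increasing sequence $(n_k)\in\N^\N$, and construct $(m_k)$ by a greedy recursion. At step $k$, having already fixed $m_1<\dots<m_{k-1}$, I pick an index $n^{(k)}\geq n_k$ with $m(n^{(k)})>m_{k-1}+n_{k-1}-1$ (setting $m_0+n_0-1:=0$ by convention to handle the base case), and put $m_k:=m(n^{(k)})$. The interval $\{m_k,\dots,m_k+n_k-1\}$ is then an initial segment of the longer interval $\{m(n^{(k)}),\dots,m(n^{(k)})+n^{(k)}-1\}\subset E$, so the union $\bigcup_k\{m_k,\dots,m_k+n_k-1\}$ sits inside $E$ and therefore belongs to $\I$, while the choice of $n^{(k)}$ gives the separation $m_{k-1}+n_{k-1}-1<m_k$ for every $k$.

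The main obstacle, and the only step requiring a real argument, is justifying that an index $n^{(k)}$ of the required kind always exists, i.e.\ that for every $N,L\in\N$ there is $n\geq N$ with $m(n)>L$. I would prove this by contradiction: if $m(n)\leq L$ for all $n\geq N$, then for each $j\geq L$, taking $n:=\max(N,j)$ yields $m(n)\leq L\leq j$ and $m(n)+n-1\geq n\geq j$, so $j\in\{m(n),\dots,m(n)+n-1\}\subset E$; hence the cofinite set $\{L,L+1,\dots\}$ would be contained in $E\in\I$, forcing $\N\in\I$ and contradicting the defining property $\N\notin\I$ of an ideal.
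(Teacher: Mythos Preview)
The paper actually omits the proof of this Fact entirely, writing ``We omit an easy proof of another equivalent formulation of this property.'' Your argument supplies exactly the details one would expect: the ``$\Leftarrow$'' direction via $n_k:=k$ is immediate, and for ``$\Rightarrow$'' your greedy recursion together with the unboundedness claim is correct --- the contradiction showing that $\{n\geq N:\ m(n)>L\}\neq\emptyset$ for all $N,L$ is valid, since otherwise a cofinite set would lie in $E\in\I$ and hence $\N\in\I$. The proof is complete and is the natural approach.
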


\begin{theorem} \label{Slovak}
Assume that $\I$ is an ideal on $\N$, with the property of long intervals.  Then there exists a divergent
series $\sum_n x_n$ in $\R$, with $x_n\not\to 0$, for which $\lambda(A(\I))=1$. Consequently, $\lambda(A(\on{Fin}))=0<1=\lambda(A(\I))$.
\end{theorem}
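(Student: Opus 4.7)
The plan is to adapt \cite[Example 2.4]{DST} to a general ideal with PLI. Using Fact \ref{FFact} for a rapidly increasing sequence $(n_k)$, fix pairwise disjoint intervals $J_k := [m_k, m_k + n_k - 1] \cap \N$ whose union $E$ lies in $\I$, arranging long gaps between successive blocks. The series $\sum_n x_n$ will be supported on $E$ with a block pattern designed so that (i) some $|x_n|$'s are bounded below (forcing $x_n \not\to 0$), (ii) the partial sums $S_j := \sum_{i \le j} x_i$ fail to converge (Fin-divergence), and (iii) $\{j : S_j \neq 0\} \subset E$, so that $\sum_n x_n$ is $\I$-convergent to $0$ --- as required by the contrapositive of Theorem \ref{meas}.

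A first attempt is $x_{m_k} = 1$, $x_{m_k + n_k - 1} = -1$, and $x_n = 0$ otherwise. For $t \in \{0,1\}^\N$, the partial sums $S_j^t := \sum_{i \le j} t(i) x_i$ are then piecewise constant on the gaps and, on the gap after $J_k$, equal the random walk value $U_k := \sum_{\ell \le k}(t(m_\ell) - t(m_\ell + n_\ell - 1))$. To establish $\I$-convergence one needs $S(t) \in \R$ with $\{j : |S_j^t - S(t)| > \ve\} \in \I$ for every $\ve > 0$; the contribution from $j \in E$ is automatically in $\I$, so the crux is to control $(U_k)$, whose excursions would otherwise spill into the gaps (which need not lie in $\I$).

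The main obstacle is precisely this: under the naive pattern each increment of $(U_k)$ has variance $\tfrac 12$, so $(U_k)$ is an a.s.\ unbounded random walk and no $S(t)$ works. The fix, in the spirit of \cite{DST}, requires a more careful choice of the block pattern, weighting the amplitudes inside each $J_k$ so that $\sum_k \on{Var}(T_k) < \infty$ while still keeping $|x_n|$ bounded below on infinitely many indices. This delicate balancing is exactly where PLI is exploited: arbitrarily long blocks $J_k$ provide the room to combine a sparse set of bounded-below entries with many small-amplitude entries of opposite sign absorbing the variance. Given such a pattern, Kolmogorov's convergence theorem produces an a.s.\ limit $S(t)$ of $(U_k)$, a Borel--Cantelli estimate confines the residual deviations of $S_j^t$ from $S(t)$ to a subset of $E$ up to a finite set, and the tail $0$--$1$ law used in the proof of Theorem \ref{meas} upgrades positive measure to full measure, yielding $\lambda(A(\I)) = 1$. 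The second assertion is then immediate, since $\sum_n x_n$ being Fin-divergent forces $\lambda(A(\on{Fin})) = 0$ by Theorem \ref{meas} applied with $\I := \on{Fin}$.
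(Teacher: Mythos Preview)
Your overall plan---support a DST-style block pattern on intervals $J_k$ whose union lies in $\I$, then use Chebyshev and Borel--Cantelli to control the block sums $T_k:=\sum_{i\in J_k}t(i)x_i$---is exactly the paper's route to $\lambda(A(\I))=1$, and that part of the sketch is sound. The gap is in the step where you try to reconcile a summable-variance bound with the clause $x_n\not\to 0$.

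You write that the long blocks allow one to ``combine a sparse set of bounded-below entries with many small-amplitude entries of opposite sign absorbing the variance,'' so that $\sum_k\on{Var}(T_k)<\infty$ while $|x_n|\ge c$ for infinitely many $n$. This cannot be done: since the coordinates $t(i)$ are independent, $\on{Var}(T_k)=\tfrac14\sum_{i\in J_k}x_i^2$, and a single entry with $|x_i|\ge c$ already forces $\on{Var}(T_k)\ge c^2/4$; variances add, so there is nothing for the small-amplitude entries to ``absorb.'' If infinitely many blocks contain such an entry (as they must, the support being $\bigcup_k J_k$), then $\sum_k\on{Var}(T_k)=\infty$ and the three-series theorem gives a.s.\ \emph{divergence} of the gap values $(U_k)$, not convergence. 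So the fix you gesture at is incompatible with the very estimate you intend to invoke, and your sketch does not close.

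For comparison, the paper's explicit construction takes $|J_k|=2^{k+1}$ and $x_{m_k+j}=\pm 2^{-k}$, giving $\on{Var}(T_k)$ of order $2^{-k}$, and then proceeds exactly via Chebyshev and Borel--Cantelli to show that the gap values converge almost surely. Note, however, that this construction has $x_n\to 0$ (indeed $\sum_n x_n^2<\infty$); the clause ``$x_n\not\to 0$'' in the statement is not actually delivered by the paper's own proof and appears to be an oversight there (echoed in Section~5). Once you drop the attempt to keep $|x_n|$ bounded below, your outline matches the paper's argument and the main conclusion $\lambda(A(\on{Fin}))=0<1=\lambda(A(\I))$ goes through; but the extra clause you were working hardest to secure is not attainable by this variance method.
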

\begin{proof}
Define an increasing sequence $(m_k)$ of integers as follows.  Let $n_k:=2^{k+1}$ for $k\in\N$, and apply Fact \ref{FFact} to the sequence $(n_k)$. We obtain an increasing sequence $(m_k)$ such that $m_k+2^{k+1}-1<m_{k+1}$ for all $k\in\N$, and
$$E:=\bigcup_{k \in\N} \{m_k,m_k+1,\ldots,m_k+2^{k+1}-1\}\in\I . $$
 
We define terms $x_i$ of a series for indices $i$ belonging to consecutive blocks 
of integers $\{ m_k, m_k+1,\dots ,m_{k+1}-1\}$ for $k\in\N$. Namely, fix $k\in\N$ and let 
\begin{displaymath}
x_{m_k+j}:=\left\{ \begin{array}{ll}
2^{-k} & \textrm{for $j=0,1,\dots ,2^k-1$}\\
-2^{-k} & \textrm{for $j=2^k,2^k+1,\dots ,2^{k+1}-1$}\\
0 & \textrm{for $j=2^{k+1},2^{k+1}+1,\dots , m_{k+1}-m_k-1.$}
\end{array} \right.
\end{displaymath}
Let additionally, $x_n:=0$ for $n<m_1$.
Clearly, $\sum_n x_n$ is divergent since each of the values 0 and 1 is achieved infinitely many times by partial sums of this series. 
Hence $\lambda(A(\on{Fin}))=0$ by Theorem \ref{meas}.

We will prove that $\lambda(A(\I))=1$. We follow the probability methods used in \cite[Example 2.4]{DST}.
For each $k\in\N$, define a random variable
$$X_k(t):=\sum_{n=m_k}^{m_k+2^{k+1}-1}t(n) x_n,\quad t\in\{0,1\}^\N.$$
Then $\mathbb EX_k=0$ and $\on{Var} X_k=1/2^k$. By the Chebyshev inequality, we have
$\lambda(B_k)\leq k^4/2^k$ where $B_k:=\{t\in\{0,1\}^\N\colon |X_k(t)|\geq 1/k^2\}$.
Hence the series $\sum_k\lambda(B_k)$ is convergent. So, $\lambda(\bigcap_{l\in\N}\bigcup_{j\geq l}B_j)=0$
by the Borel-Cantelli lemma. Let $B:=\bigcup_{l\in\N}\bigcap_{j\geq l}B_j^c$. Then $\lambda(B)=1$. 

Finally, we will show that $B\subset A(\I)$ which yields the assertion.
Let $t\in B$ and pick $l\in\N$ such that $t\in\bigcap_{j\geq l}B_k^c$.
Denote $D_n(t):=\sum_{i=1}^n t(i) x_i$ for $n\in\N$, and
put
 $$F:=\bigcup_{k\in\N}\{n\in\N\colon m_k+2^{k+1}\leq n \leq m_{k+1}-1\} .$$
 Then $F$ is in the dual filter of $\I$, by the choice of $E$ . So,
 it is enough to prove that
 the sequence $(D_n(t))_{n\in F}$ is convergent. (Then $\sum_n x_n$ is $\I$-convergent
 and so, $t\in A(\I)$.) 
 
 From $t\in\bigcap_{j\geq l}B_j^c$ it follows 
that  $|X_j(t)|\leq 1/j^2$. Thus for any $p>k\geq l$,
 \begin{equation} \label{Cauchy} 
 |D_{m_p-1}(t)-D_{m_k-1}(t)|\leq \sum_{j=k+1}^\infty|D_{m_{j+1}-1}(t)-D_{m_j-1}(t)|
 =\sum_{j=k+1}^\infty|X_j(t)|\leq\sum_{j=k+1}^\infty\frac{1}{j^2}.
 \end{equation}
 Hence the sequence $(D_{m_j-1}(t))_{j\in\N}$ is Cauchy since $\sum_{j=k+1}^\infty1/j^2\to 0$ if $k\to\infty$.
 By the definition of $(x_n)$, for each $j\in\N$ we have
 $$D_{m_j+2^{j+1}-1}(t)=D_{m_j+2^{j+1}}(t)=\dots =D_{m_{j+1}-1}(t).$$
 So, by (\ref{Cauchy}), the sequence $(D_n(t))_{n\in F}$ is Cauchy. Hence it is convergent as desired.
\end{proof}

We will show that ideals from a wide class have PLI. Let us introduce a weaker version of this property.
We say that an ideal $\I$ on $\N$ has the {\em weak property of long intervals} (in short, wPLI)
if,  for every $n\in\N$, there exists a sequence $(m^n_j)_{j\in\N}\in\N^\N$ such that 
$m^n_j+n-1<m^n_{j+1}$ for each $j\in\N$ and $\bigcup_{j\in\N}\{m^n_j,m^n_j+1,\ldots,m^n_j+n-1\} \in\I$. 
An equivalent version of this property states that, for every $n\in\N$, there exist a set $E_n\in\I$ and a sequence $(m^n_j)_{j\in\N}\in\N^\N$ such that $m^n_j+n-1<m^n_{j+1}$ for each $j\in\N$, and $E_n\supset \bigcup_{j\in\N}\{m^n_j,m^n_j+1,\ldots,m^n_j+n-1\}$.
We then say that the sets $E_n$, $n\in\N$, are {\em witnesses} of wPLI for $\I$.

\begin{remark} \label{rem}
\begin{itemize}
\item[(i)] If $\I\neq\on{Fin}$ is shift-invariant, then it has wPLI. Indeed, pick an infinite set $E\in\I$. Then
$\bigcup_{j=0}^{n-1}(E+j)$, $n\in\N$, are witnesses of wPLI for $\I$.
\item[(ii)] If $\I$ is dense, then it has wPLI. Indeed, pick an infinite set $B_0\in\I$ and choose, inductively,
infinite sets $B_j\subset B_{j-1}+1$ in $\I$ for $j\in\N$. Then $\bigcup_{j=0}^{n-1}B_j$, $n\in\N$, 
are witnesses of wPLI for $\I$.
\item[(iii)] If $\I$ is a P-ideal with wPLI, then it has PLI. Indeed, let $E_n$, $n\in\N$ be witnesses of wPLI for $\I$.
Pick $E\in\I$ such that $E_n\setminus E\in\on{Fin}$ for each $n\in\N$. Then $E$ is a witness of PLI for $\I$.
\end{itemize}
\end{remark}
By (ii) and (iii), every dense P-ideal has PLI.
Several classes of analytic P-ideals are described in \cite{Far}, among them, summable ideals, Erd\H{o}s-Ulam ideals;
many of them are dense. Also a class of density-type P-ideals was studied in \cite{BDFS}.
So, there are many known classes of ideals that have PLI, and then our Theorem \ref{Slovak} is applicable.

Our next result will show that, for some type of non-dense ideals on $\N$, the conclusion of the previous theorem cannot hold.

\begin{theorem} \label{Michael}
Let $\I\neq\on{Fin}$ be an ideal on $\N$ for which there exists an infinite set $E\subset\N$ such that:
\begin{itemize}
\item[(I)] no infinite subset of $E$ belongs to $\I$; 
\item[(II)] $M:=\sup\{e_{n+1}-e_n\colon n\in\N\}<\infty$ where $(e_n)_{n\in\N}$ is an increasing enumeration of $E$.
\end{itemize}
Then, for each normed space $X$ and for every divergent series $\sum_n x_n$ in $X$ which is $\I$-convergent, 
we have $\lambda(A(\I))=0$.
\end{theorem}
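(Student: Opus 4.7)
The plan is to exhibit a Borel set $C \supseteq A(\I)$ with $\lambda(C) = 0$. For each $n \in \N$ put $B_n := \{e_n+1, \ldots, e_{n+1}\}$ and $Z_n(t) := \sum_{i \in B_n} t(i) x_i$ for $t \in \{0,1\}^{\N}$. By (II) each $|B_n| \leq M$, and the blocks $(B_n)_{n \in \N}$ are pairwise disjoint, so each $Z_n$ depends only on finitely many coordinates of $t$ and different $Z_n$'s depend on disjoint coordinate sets. Consequently $(Z_n)$ is a sequence of independent $X$-valued Borel random variables on $(\{0,1\}^{\N}, \lambda)$. The target Borel set is $C := \{t \in \{0,1\}^{\N} : \|Z_n(t)\| \to 0\}$.

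First I would verify $A(\I) \subseteq C$. Write $S_n := \sum_{i=1}^{n} x_i$ and $T_n(t) := \sum_{i=1}^{n} t(i) x_i$. The $\I$-convergence of $\sum_n x_n$ to some $x^* \in X$ means $\{n : \|S_n - x^*\| > \varepsilon\} \in \I$ for every $\varepsilon > 0$; by (I) this set meets $E$ in a finite subset, so $S_{e_n} \to x^*$ in the norm of $X$. Applied to any $t \in A(\I)$ the same reasoning gives $T_{e_n}(t) \to y^*(t)$ for some $y^*(t) \in X$, and taking consecutive differences yields $Z_n(t) = T_{e_{n+1}}(t) - T_{e_n}(t) \to 0$, so $t \in C$.

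The heart of the argument is to prove $\lambda(C) = 0$. Norm-divergence of $\sum_n x_n$ together with the failure of the Cauchy criterion furnishes $\varepsilon_0 > 0$ and indices $j_k < \ell_k$ with $j_k \to \infty$ and $\|S_{\ell_k} - S_{j_k}\| > \varepsilon_0$. Writing $j_k \in B_{n_k}$ and $\ell_k \in B_{n_k'}$ with $n_k \leq n_k'$, telescoping through the intermediate block endpoints and using $S_{e_n} \to x^*$ shows that for all large $k$ one of the within-block remainders $S_{\ell_k} - S_{e_{n_k'}}$ or $S_{e_{n_k+1}} - S_{j_k}$ has norm at least $\varepsilon_0/3$; in the suffix case $S_{e_{n_k+1}} - S_{j_k}$ one converts to a prefix via the fact that the full block sum $S_{e_{n_k+1}} - S_{e_{n_k}}$ already tends to zero. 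I expect this telescoping/conversion step to be the main obstacle, and it is the place where condition (II) is essential: the bounded-gap hypothesis confines the offending partial sum to a single block of length at most $M$. Moreover, the relevant index must be shown to lie \emph{strictly} inside its block, since a prefix equal to the whole block sum tends to $0$ and cannot have norm $\geq \varepsilon_0/3$ for large $n$. The outcome is that there are infinitely many $n$ for which some $r_n \in (e_n, e_{n+1})$ satisfies $\|\sum_{i = e_n + 1}^{r_n} x_i\| \geq \varepsilon_0/4$.

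For each such $n$, the bit-configuration $t(i) = 1$ on $(e_n, r_n]$ and $t(i) = 0$ on $(r_n, e_{n+1}]$ makes $Z_n(t) = \sum_{i=e_n+1}^{r_n} x_i$, so $\|Z_n(t)\| \geq \varepsilon_0/4$, and this configuration of bits on $B_n$ has $\lambda$-probability $2^{-|B_n|} \geq 2^{-M}$. Hence $\lambda(\{t : \|Z_n(t)\| \geq \varepsilon_0/4\}) \geq 2^{-M}$ for infinitely many $n$, and by the independence of the $Z_n$ the second Borel-Cantelli lemma yields $\lambda(\{t : \|Z_n(t)\| \geq \varepsilon_0/4 \text{ for infinitely many } n\}) = 1$. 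Therefore $\lambda(C) = 0$, and since $A(\I) \subseteq C$ we obtain $\lambda(A(\I)) = 0$.
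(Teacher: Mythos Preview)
Your proof is correct and follows essentially the same strategy as the paper: both use that $\I$-convergence together with condition (I) forces $S_{e_n}\to x^*$ (and likewise $T_{e_n}(t)\to y^*$ for $t\in A(\I)$), then locate infinitely many blocks of length $\le M$ carrying a large prefix sum, and finish with a Borel--Cantelli argument on independent events of probability $\ge 2^{-M}$. The paper reaches the large prefix sums more directly than your Cauchy-plus-telescoping step: since $(S_n)$ diverges while $S_{e_n}\to x^*$, one simply picks $c_k\notin E$ with $\|S_{c_k}-x^*\|>2\varepsilon$ and takes the largest $e_n<c_k$, giving $\|S_{c_k}-S_{e_n}\|>\varepsilon$ immediately.
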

\begin{proof}
Assume that a series $\sum_n x_n$ in a normed space $X$ is divergent and the sequence of its partial sums is $\I$-convergent to $x\in X$. 

\noindent {\bf Claim.} If $(r_n)$ is a subsequence of $(e_n)$, then $\sum_{i=1}^{r_n} x_i\to x$ if $n\to\infty$.

Indeed, given $\ve>0$, we have
$$\left\{ r_n\colon n\in\N\textrm{ and } \left\| \sum_{i=1}^{r_n} x_i-x\right\|>\ve\right\}
\subset\left\{ m\in\N\colon \left\| \sum_{i=1}^m x_i-x\right\|>\ve\right\}\in\I .
$$
So, by (I) we obtain $\left\{ r_n\colon n\in\N\textrm{ and } \left\| \sum_{i=1}^{r_n} x_i-x\right\|>\ve\right\}\in\I\cap\Pe(E)\subset\on{Fin}$, which shows the Claim.

Since $\I\neq\on{Fin}$ and (I) holds, the set $C:=\N\setminus E$ is infinite. Note that $\N$ is a union of consecutive finite intervals contained either in $E$ or in $C$. From (II) it follows that the length of each interval contained in $C$ does not exceed $M$.
Pick two subsequences $(b_n)$ and $(d_n)$ of $(e_n)$ consisting of the respective (left and right) end-points of consecutive intervals of $E$. So, we have $b_n\leq d_n<b_{n+1}$, $[b_n,d_n]\cap\N\subset E$, $b_n-1\notin E$, and $d_n+1\notin E$ for each $n\in\N$. 

Since the series $\sum_n x_n$ is divergent, its partial sums cannot converge to $x$. So, pick $\ve>0$ and an increasing sequence $(c_k)$ of integers in $C$ such that $\|\sum_{i=1}^{c_k}x_i-x\|>2\ve$. 
We have $c_k\in (d_{n_k},b_{n_k+1})$ for the respectively chosen indices $n_k$. 
By Claim, $\|\sum_{i=1}^{d_{n_k}}x_i -x\|<\ve$ for all but finitely many $k's$. For simplicity, assume that it hold for all $k's$. Then for each $k\in\N$,
\begin{equation} \label{Noti}
\left\|\sum_{i=d_{n_k}+1}^{c_k}x_i\right\|\ge\left\|\sum_{i=1}^{c_k}x_i-x\right\| -\left\|\sum_{i=1}^{d_{n_k}}x_i-x\right\|>2\ve-\ve=\ve .
\end{equation}

Now, observe that $A(\I)$ is disjoint from $\bigcap_{j\in\N}\bigcup_{k>j}Z_k=\emptyset$ where
$$Z_k:=\left\{t\in\{0,1\}^\N\colon t(d_{n_k}+1)=\ldots=t(c_{k})=1 \textrm{ and }t(c_{k}+1)=\ldots=t(b_{n_k+1})=0\right\} .$$
Indeed, suppose that $t\in A(\I)\cap\bigcap_{j\in\N}\bigcup_{k>j}Z_k$.
Since $t\in A(\I)$, the partial sums of $\sum_n t(n)x_n$ are $\I$-convergent to $y\in X$.
Since $t\in\bigcap_{j\in\N}\bigcup_{k>j}Z_k$, the condition defining $Z_k$ holds for infinitely many $k's$. We should pass to a subsequence but, for simplicity, let us assume that the condition holds for each $k\in\N$. On the one hand, by (\ref{Noti}), we have 
$$\left\|\sum_{i=1}^{b_{n_k+1}} t(i)x_i-\sum_{i=1}^{d_{n_k}} t(i)x_i\right\|=\left\|\sum_{i=d_{n_k}+1}^{c_k} x_i\right\|>\ve\;\textrm{ for all}\;k\in\N .$$
On the other hand,
applying the Claim to $\sum_n x_n$ replaced by $\sum_n t(n)x_n$, we have
$$\sum_{i=1}^{b_{n_k+1}} t(i)x_i-\sum_{i=1}^{d_{n_k}} t(i)x_i\to y-y=0\;\textrm{ if }\;k\to\infty ,$$
a contradiction.

Finally, it suffices to show that $\lambda(\bigcup_{k>j}Z_k)=1$ for each $k\in\N$.
Note that the events $Z_k$, $k\in\N$, are independent. Considering complements, we obtain
$$\lambda\left(\bigcap_{j<k\leq m}(Z_k)^c\right)=\prod_{j<k\leq m}\left(1-\frac{1}{2^{b_{n_k+1}-d_{n_k}}}\right)\leq
\left(1-\frac{1}{2^M}\right)^{m-j}\to 0\;\textrm{ if }\;m\to\infty .$$
\end{proof}

From Theorem \ref{Michael} it follows that the implication in the second assertion of Theorem \ref{meas} cannot be reversed.
Indeed, fix an infinite co-infinite set $E\subset\N$ satisfying condition (II). Consider, for instance, the ideal
$\I$ equal to $\{ B\subset\N\colon B\cap E\in\on{Fin}\}$. Then $\I$ is $F_\sigma$ and, by Theorem \ref{Michael}, in a Banach space $X$, we have $\lambda(A(\I))=0$ for every divergent series in $X$ that is $\I$-convergent.

We finish this section with two examples of ideals that show some further interplays between the notions of denseness, PLI and shift-invariance (cf. Remark \ref{rem}, (i)--(ii)). 

\begin{example} \label{Id1}
Let $\I$ consist of all sets $E\subset\N$ such that $E\subset\{ 2^n\pm j\in\N\colon n\in\N,\; j\in\{ 0,\dots ,k\}\}$ 
for some $k\in\N$. Observe that $\I$ is a two-sided shift-invariant ideal on $\N$. Note that $\I$ is not dense. Indeed, let $B$ consist of all centers $c_n$ of intervals $[2^n, 2^{n+1}]$, $n\in\N$. If we consider an infinite subset $\{ c_{n_i}\colon i\in\N\}$ of $B$ and suppose that it is covered by $\{ 2^n\pm j\in\N\colon n\in\N,\; j\in\{ 0,\dots ,k\}\}$ 
for some $k\in\N$, we obtain a contradiction since we can pick $i\in\N$ large enough to have the distance between $c_{n_i}$ and $2^{n_i}$ (the same as that between $c_{n_i}$ and $2^{n_i+1}$) greater than $k$.
\end{example}

\begin{example} \label {Id2}
Let $\I$ be the smallest ideal containing the family of all sets of the form
$\{ b_n \colon n\in\N\}$ where $(b_n)$ is an increasing sequence of positive integers such that
$b_{n+1}-b_n\to\infty$. Denote by $\B$ the set of all such sequences.
Note that $\I$ is dense since every one-to-one sequence of positive integers has
a subsequence in $\B$. We will show that $\I$ does not have PLI. 
Note that this ideal was studied in \cite{KN} and denoted by ${\mathcal Lac}$.
Suppose that $\I$ has PLI. Let $A:=\bigcup_{j=1}^k\{b_n^j\colon n\in\N\}$ be a witness of PLI for $\I$ where $k\in\N$ and
$(b_n^j)_{n\in\N}\in\B$ for $j\leq k$. Pick $n_0\in\N$ such that for all $n>n_0$ and $j\leq k$ we have
$b_{n+1}^j-b_n^j>k$. Since $A$ is a witness of PLI for $\I$, pick $a>\max\{b^j_{n_0}\colon 1\leq j\leq k\}$ such that $J:=\{a,a+1, \dots, a+k\}\subset A$. By the choice of $a$, each set $J\cap\{b^j_n\colon n\in\N\}$, $1\leq j\leq k$, has
at most one element. So, the cardinality of $J=\bigcup_{j=1}^k(J\cap\{b^j_n\colon n\in\N\})$ is not greater than $k$.
This is a contradiction since $J$ has $k+1$ elements.
\end{example}

\begin{question} \label{que1}
Does the statement of Theorem \ref{Slovak} remains true if we replace PLI by wPLI for an ideal $\I$?
In particular, does it work for the ideal $\I$ from Example \ref{Id2}?
\end{question}

\section{Concluding remarks}
Corollary \ref{Cha} yields a characterization of series $\sum_n x_n$ in a Banach space for which
$A(\I, (x_n))$ is meager (or, nonmeager) provided that $\I$ is an ideal with the Baire property.
It would be interesting to find an analogous characterization in the measure case.
This problem seems difficult but let us shed some light on it. 
By Theorem \ref{meas},
if an ideal $\I$ is analytic or coanalytic, then the measure of $A(\I, (x_n))$ is 
either 0 or 1. However, it is rather hard to describe those series for which any of these cases holds, for a fixed class of ``good'' ideals, or for a fixed nontrivial ideal such as
$\I_d$.

Look at the case of usual convergence. For any series $\sum_n x_n$ in $\R$, the following nice equivalence is known:
\begin{equation}\label{iff}
\lambda(A(\on{Fin},(x_n))=1 \iff (\textrm{the series }\sum_n x_n,\;\sum_n x_n^2\textrm{ are convergent).}
\end{equation}
Indeed, if $\lambda(A(\on{Fin},(x_n))=1$ then $\sum_nx_n$ is not divergent by Theorem \ref{meas}.
Hence $\sum_n x_n$ is convergent, and in this case, $\lambda(A(\on{Fin},(x_n))=1$ iff $\sum_{n=1}^\infty x_n^2<\infty$ which is proved in \cite[Theorem 3]{RRR} and \cite[Theorem 2.2]{DST}.
The equivalence (\ref{iff}) can easily be extended to series in $\R^k$ or in a finite-dimensional Banach space, with 
$\sum_n x_n^2$
replaced by $\sum_n \|x_n\|^2$. 
Note that (\ref{iff}) need not hold 
in the infinite-dimensional case. Indeed, in the Banach space $c_0$, consider the sequence $x_n:=(0,\dots,0,{1}/{\sqrt{n}},0,\dots)$, $n\in\N$, where ${1}/{\sqrt{n}}$ appears on the $n$th place. Then $\sum_{n=1}^{\infty} ||x_n||^2=\sum_{n=1}^\infty {1}/{n}=\infty$ while as $A(\on{Fin},(x_n))=\{0,1\}^\N$. So, we can ask about the respective characterization of $\lambda(A(\on{Fin},(x_n))=1$
for a series $\sum_n x_n$ in infinite-dimensional Banach spaces.

Returning to our general problem dealing with the measure of $\I$-convergent subseries in $\R$, one may expect that a characterization of $\lambda(A(\I,(x_n))=1$,
for a ``good'' ideal $\I$, is analogous to that in (\ref{iff}) where the convergence of $\sum_n x_n$ (on the right-hand side) is replaced by its
$\I$-convergence. However, this cannot be true for
an ideal $\I$ that has PLI. Indeed, for the $\I$-convergent series considered in Theorem \ref{Slovak}, we have $\lambda(A(\I,(x_n))=1$,
but $\sum_{n=1}^\infty x_n^2=\infty$ since $x_n\not\to 0$.

Recall  that, for usual convergence of subseries in $\R$, there is an asymmetry between the category and the measure cases; cf.
\cite[p. 195]{RRR}. The category counterpart of (\ref{iff}) is the following: for every series
$\sum_n x_n$ in $\R$, the set $A(\on{Fin}, (x_n))$ is comeager (nonmeager) iff $\sum_{n=1}^\infty |x_n|<\infty$. 
(See \cite[Theorem 3]{RRR}; this fact is generalized in Corollary \ref{Cha}.) Hence, almost every subseries of 
$\sum_n (-1)^n/n$ is convergent in the measure sense, whereas almost every subseries of $\sum_n (-1)^n/n$ is divergent
in the category sense. Hence, the Cantor space $\{0,1\}^\N$ can be partitioned into the Borel meager set
$A(\on{Fin},(x_n))$ and its complement which is of measure zero. This is another example of parition of a Polish space into two parts that are small in a different sense (see \cite{Ox}, for other natural partitions).

\begin{remark} {\em Let $\I$ be an ideal on $\N$ with the Baire property. For every series $\sum_n x_n$ in a Banach space,
if $A(\I, (x_n))$ is of measure zero, then $A(\I, (x_n))$ is meager. Indeed, suppose  that $A(\I, (x_n))$ is nonmeager. Then 
$\sum_n x_n$ is unconditionally convergent by Theorem \ref{zero-one}. Hence $(A(\I, (x_n)))$ is of measure 1 since
it equals $\{0,1\}^\N$ by Theorem \ref{uncon}.}
\end{remark}

In this paper, we have not touched natural problems connected with the measure of the set of $\I$-convergent $\pm 1$-modifications of a given series
(again one can consider the standard product measure on $\{-1,1\}^\N$).
This is another interesting topic of research. Note that some results in this direction, for usual convergence, were obtained by Dindos in \cite{Di1} for series in a Hilbert space.

\noindent{\bf Acknowledgement.} We would like to thank Andrzej Komisarski for his useful remarks on the proof of Theorem \ref{Slovak}.


\begin{thebibliography}{abc}
\bibitem{BDFS} M. Balcerzak, P. Das, M. Filipczak, J. Swaczyna, {\it Generalized kinds of density and the associated ideals},
Acta Math. Hungar. {\bf 147} (2015), 97--115.
\bibitem{BGW} M. Balcerzak, Sz. G{\l}\c{a}b, A. Wachowicz, {\it Qualitative properties of ideal convergent subsequences and rearrangements}, Acta Math. Hungar. {\bf 150} (2016), 312–-323.
\bibitem{BPW} M. Balcerzak, M. Pop{\l}awski, A. Wachowicz, { \it The Baire category of ideal convergent subseries and rearrangements}, Topology Appl. {\bf 231} (2017), 219--230.
\bibitem{CST} J. \v{C}ervene\v{n}ansk\'y, T. \v{S}al\'at, V. Toma, {\it Remarks on statistical and $\I$-convergence of series}, Math. Bohem. {\bf 130} (2005), 177--184.
\bibitem{CGK} J. Connor, M. Ganichev, V. Kadets, {\it A characterization of Banach spaces with
separable duals via weak statistical convergence}, J. Math. Anal. Appl. {\bf 244}
(2000), 251–-261.
\bibitem{D} K. Dems, {\it On $\I$-Cauchy sequences}, Real Anal. Exchange {\bf 30} (2004/2005), 123--128.
\bibitem{DMK} G. Di Maio, Lj. D.R. Koˇcinac, {\it Statistical convergence in topology}, Topology Appl.
{\bf 156} (2008), 28–-45.
\bibitem{Di1} M. Dindo\v{s}, {\it On series with alternating signs in the Euclidean metric}, Real Anal. Exchange {\bf 25} (1999/00), 599--616.
\bibitem{Di2} M. Dindo\v{s}, {\it On a typical series with alternating signs}, Real Anal. Exchange {\bf 25} (1999/00), 617--628.
\bibitem{DMS} M. Dindo\v{s}, I. Marti\v{s}ovit\v{s}, T. \v{S}al\'at, {\it Remarks on infinite series in linear normed spaces}, Tatra Mt. Math. Publ. {\bf 19} (2000), 31--46.
\bibitem{DST} M. Dindo\v{s}, T. \v{S}al\'at, V. Toma, {\it Statistical convergence of infinite series}, Czechoslovak Math. J. {\bf 53} (2003), 989--1000.
\bibitem{DS} Y. Dybskiy, K. Slutsky, {\it Riemann Rearrangement Theorem for some types of convergence}, J. Math. Anal. Appl. {\bf 373} (2011), 605--613.
\bibitem{Far} I. Farah, {\it Analytic quotients. Theory of lifting for quotients over analytic ideals on integers}, Mem. Amer. Math. Soc. {\bf 148} (2000).
\bibitem{GO} Sz. G{\l}\c{a}b, M. Olczyk, {\it Convergence of series on large set of indices}, Math. Slovaca {\bf 65} (2015), 1095--1106.
\bibitem{JN} S.A. Jalali-Naini, {\it The monotone subsets of the Cantor space, filters and desriptive set theory}, PhD thesis, University of Oxford 1976.
\bibitem{KK} M.I. Kadets, V.M. Kadets, {\it Series in Banach Spaces}, Birkh\"{a}user, Basel 1997.
\bibitem{K1} R.R. Kallman, {\it Subseries and category}, J. Math. Anal. Appl. {\bf 132} (1988), 234--237.
\bibitem{KN} P. Klinga, A. Nowik, {\it Extendability to summable ideals}, Acta Math. Hungar. {\bf 152} (2017), 150--160.
\bibitem {KSW} P. Kostyrko,  T. \v{S}al\'at, W. Wilczy\'nski, {\it $\mathcal I$-convergence}, Real Anal. Exchange {\bf 26} (2000-2001), 669--685.
\bibitem{KMSS} P. Kostyrko, M. Ma\v{c}aj, T. \v{S}al\'at, M. Sleziak, {\it $\I$-convergence and extremal $\I$-points}, Math. Slovaca {\bf 55} (2005), 443--464.
\bibitem{L} A. Leonov, {\it On the coincidence of the limit point range and the sum range along a filter for filter convergent series}, Visn. Khark. Univ., Ser. Mat. Prykl. Mat. Mekh., {\bf 626} (2008), 134--140.
\bibitem{LO} A. Leonov, C. Orhan, {\it On filter convergence of series}, Real Anal. Exchange {\bf 40} (2014/15), 459--474.
\bibitem{Mi} H. Miller, {\it A measure theoretical subsequence characterization of statistical convergence}, Trans. Amer. Math. Soc. {\bf 347} (1995), 1811--1819.
\bibitem {Ox} J.C. Oxtoby, {\it Measure and category}, Springer, New York 1980.
\bibitem {RRR} M. Bhaskara Rao, K.P.S. Bhaskara Rao, B.V. Rao, {\it Remarks on subsequences, subseries and rearrangements},
Proc. Amer. Math. Soc. {\bf 67} (1977), 293--296.
\bibitem{S} T. \v{S}al\'at, {\it On subseries of divergent series}, Mat. \v{C}as. SAV {\bf 18} (1968), 312--338.
\bibitem {S1} T. \v{S}al\'at, {\it On statistically convergent sequences of real numbers}, Mat. Slovaca {\bf 30} (1980), 139--150.
\bibitem{Ta} M. Talagrand, {\it Compacts de fonctions mesurables et filtres non mesurables}, Studia Math. {\bf 150} (1980). 13--43.
\end{thebibliography}
\end{document}